\newtheorem{theorem}{Theorem}[section]
\newtheorem{corollary}[theorem]{Corollary}
\newtheorem{example}{Example}[section]
\begin{document}

\title{\textbf{Steiner (revised) Szeged index of graphs}}

\author{
Modjtaba Ghorbani$^{a}$, Xueliang Li$^{b, c}$, Hamid Reza Maimani$^{a}$,\\Yaping
Mao$^{c}$,  Shaghayegh Rahmani$^{a}$, Mina Rajabi-Parsa$^{a}$
 }

\date{\today}

\maketitle

\begin{center}
$^a$Department of Mathematics, Faculty of Science \\
Shahid Rajaee Teacher Training University\\
Tehran, 16785-136, I.R. Iran\\
\medskip
$^b$Center for Combinatorics and LPMC\\
Nankai University, Tianjin 300071, China\\
\medskip
$^c$School of Mathematics and Statistics\\
Qinghai Normal University\\
Xining, Qinghai 810008, China
\medskip
\end{center}

\begin{abstract}
The Steiner distance in a graph, introduced by Chartrand et al. in
1989, is a natural generalization of the concept of classical graph
distance. For a connected graph $G$ of order at least 2 and
$S\subseteq V(G)$, the Steiner distance $d_G(S)$ of the set $S$ of vertices
in $G$ is the minimum size of a connected subgraph whose vertex set contains
or connects $S$.
In this paper, we introduce the concept of the Steiner (revised) Szeged
index ($rSz_k(G)$) $Sz_k(G)$ of a graph $G$, which is a natural generalization of the well-known (revised) Szeged index of chemical use. We determine the
$Sz_k(G)$ for trees in general. Then we give a formula
for computing the Steiner Szeged index of a graph in terms of orbits of
automorphism group action on the edge set of the graph. Finally, we give sharp upper and lower bounds of
($rSz_k(G)$) $Sz_k(G)$ of a connected graph $G$, and establish some of its properties.
Formulas of ($rSz_k(G)$) $Sz_k(G)$ for small and large $k$ are also
given in this paper.
\end{abstract}

\noindent {\bf Keywords:} distance, Steiner distance, (revised) Szeged index, Steiner (revised) Szeged index.

\medskip\noindent
{\bf AMS Subject Classification 2010:} 05C05, 05C12, 05C35.

\section{Introduction}\label{sec:intro}

All graphs in this paper are assumed to be undirected, finite and
simple. We refer to \cite{Bondy} for graph theoretical notation and
terminology not specified here. Distance is one of basic concepts in
graph theory \cite{BuHa}. If $G$ is a connected graph and $u,v\in
V(G)$, then the \emph{distance} $d(u,v)=d_G(u,v)$ between $u$ and
$v$ in $G$ is the length of a shortest path of $G$ connecting $u$ and $v$. For
more details on classical distance, see \cite{Goddard}.

Let $e= uv$ be an edge of a graph $G$. Let $N_u(e|G)$ be the set of
vertices of $G$ which are closer to $u$ than to $v$ and let
$N_v(e|G)$ be set of those vertices which are closer to $v$ than to $u$.
The set of those vertices which have equal distance from $v$ and $u$ is denoted
by $N_0(e|G)$. More formally,
$$
N_u(e|G)=\{w\in V(G):~d_G(w,u)<d_G(w,v)\},
$$
$$
N_v(e|G)=\{w\in V(G):~d_G(w,v)<d_G(w,u)\}
$$
and
$$
N_0(e|G)=\{w\in V(G):~d_G(w,u)=d_G(w,v)\}.
$$
Let $n_u(e)= |N_u(e|G)|$, $n_v(e)= |N_v(e|G)|$ and $n_0(e)= |N_0(e|G)|$. Then the \emph{Szeged index} of a graph
$G$, denoted by $Sz(G)$, is defined as
$$
Sz(G)=\sum_{e=uv\in E(G)}n_u(e)n_v(e),
$$
and the \emph{revised Szeged index} of a graph $G$, denoted by $rSz(G)$, is defined as
$$
rSz(G)=\sum_{e=uv\in E(G)}(n_u(e)+n_0(e)/2)(n_v(e)+n_0(e)/2).
$$
The basic properties of the (revised) Szeged index and bibliography on ($rSz(G)$) $Sz(G)$ are presented in
\cite{AH, arX, GutmanDobrynin, PR, XZ}.

The Steiner distance of a graph, introduced by Chartrand et al. in
\cite{Chartrand} in 1989, is a natural and nice generalization of
the concept of the classical graph distance. For a graph $G=(V,E)$
and a set $S\subseteq V$ of at least two vertices, \emph{an
$S$-Steiner tree} or \emph{a Steiner tree connecting $S$} (or
simply, \emph{an $S$-tree}) is a subgraph $T=(V',E')$ of $G$ that is
a tree with $S\subseteq V'$. Let $G$ be a connected graph of order
at least $2$ and let $S$ be a nonempty set of vertices of $G$. Then
the \emph{Steiner distance} $d_G(S)$ in $G$ among the vertices of $S$ (or
simply the distance of $S$) is the minimum size of a connected
subgraph whose vertex set contains or connects $S$. Note that if $H$ is a
connected subgraph of $G$ such that $S\subseteq V(H)$ and
$|E(H)|=d(S)$, then $H$ is a tree. Clearly,
$d_G(S)=\min\{|E(T)|\,\,,\,S\subseteq V(T)\}$, where $T$ is a
subtree of $G$. Furthermore, if $S=\{u,v\}$, then $d_G(S)=d(u,v)$ is
nothing new, but the classical distance between $u$ and $v$ in $G$.
Clearly, if $|S|=k$, then $d_G(S)\geq k-1$. For more details on the
Steiner distance, we refer to \cite{Ali,Caceresa, Chartrand, Goddard, Oellermann}.

In \cite{LMG}, Li et al. proposed a generalization of the concept of Wiener
index, using Steiner distance. Thus, the \emph{$k$th
Steiner Wiener index} $SW_k(G)$ of a connected graph $G$ is defined
by
$$
SW_k(G)=\sum_{\overset{S\subseteq V(G)}{|S|=k}} d(S)\,.
$$
For $k=2$, the Steiner Wiener index coincides with the ordinary
Wiener index. It is usual to consider $SW_k$ for $2 \leq k \leq
n-1$, but the above definition implies $SW_1(G)=0$ and $SW_n(G)=n-1$
for a connected graph $G$ of order $n$. For more details on the Steiner
Wiener index, we refer to \cite{LMG, LMG2, MWG, MWGL}.

Let $G$ be a connected graph and $e$ an edge of $G$. For a positive integer $k$,
from the Steiner distance, we define another three sets $N_u(e;k)$, $N_v(e;k)$ and $N_0(e;k)$
as follows.
$$
N_u(e;k) = \{S'\subseteq V(G), |S'|=k-1\,|\,d_G(S'\cup
\{u\})<d_G(S'\cup \{v\}), \ u\notin S', \ v\notin S'\},
$$
$$
N_v(e;k) = \{S'\subseteq V(G), |S'|=k-1\,|\,d_G(S'\cup
\{v\})<d_G(S'\cup \{u\}), \ v\notin S', \ u\notin S'\},
$$
and
$$
N_0(e;k) = \{S'\subseteq V(G), |S'|=k-1\,|\,d_G(S'\cup
\{u\})=d_G(S'\cup \{v\}), \ u\notin S', \ v\notin S'\}.
$$

Let $n_u(e;k)=|N_u(e;k)|$, $n_v(e;k)=|N_v(e;k)|$ and  $n_0(e;k)=|N_0(e;k)|$.
Then the \emph{$k$th Steiner Szeged index} of a graph $G$ is defined as
$$
Sz_{k}(G) = \sum_{e=uv\in E(G)} (n_u(e;k)+1) (n_v(e;k)+1).
$$
Analogously, the \emph{$k$th Steiner revised Szeged index} of a graph $G$ is defined as
$$
rSz_k(G)=\displaystyle\sum_{e=uv\in E(G)}(n_u(e;k)+n_0(e;k)/2+1)(n_v(e;k)+n_0(e;k)/2+1).
$$

Here, one may note that the formula is not the same as the classical
Szeged index in form. If $k=2$, then
$$
N_u(e;2) = \{w\in V(G)\,|\,d_G(u,w)<d_G(v,w), \ u\neq w, v\neq w\}.
$$
One can see $N_u(e;2)\neq N_u$ since we require $u\neq w$. By our
definition, the classical Szeged index $Sz(G)$ can be written as
$$
Sz(G)=Sz_2(G)=\sum_{e=uv\in E(G)}(n_u(e;2)+1)(n_v(e;2)+1),
$$
where $N_u(e;2) = \{w\in V(G)\,|\,d_G(u,w)<d_G(v,w), \ u\neq w\}$ and
$N_v(e;2) = \{w\in V(G)\,|\,d_G(v,w)<d_G(u,w), \ u\neq w\}$.

So, as one can easily see that the Steiner (revised) Szeged index is a natural generalization of the well-known (revised) Szeged index of chemical use.

We proceed as follows. In the next section, we determine the
$Sz_k(G)$ for trees in general. Then, we give a formula
for computing the Steiner Szeged index of a graph in terms of orbits of
automorphism group action on the edge set of the graph. Finally, we give sharp upper and lower bounds of
($rSz_k(G)$) $Sz_k(G)$ of a connected graph $G$, and establish some of its properties.
Formulas of ($rSz_k(G)$) $Sz_k(G)$ for small and large $k$ are also
given.

\section{Results for trees}

At first, we consider trees. The following result is easy to obtain.
\begin{theorem}\label{2}
For a tree $T$,
$$
Sz_{k}(T)=\sum_{e=uv\in E(T)} \left(\displaystyle{n_u(e)-1 \choose
k-1}+1\right)  \left(\displaystyle{n_v(e)-1\choose k-1}+ 1\right),
$$
where $2\leq k\leq |V(T)|-1$.
\end{theorem}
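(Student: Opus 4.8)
The plan is to reduce the problem to a purely combinatorial count by exploiting the fact that in a tree every edge is a cut edge and that the minimal connected subgraph (Steiner tree) spanning a given vertex set is unique. Fix an edge $e=uv$. Deleting $e$ splits $T$ into two subtrees $T_u$ and $T_v$ containing $u$ and $v$ respectively, with $|V(T_u)|=n_u(e)$ and $|V(T_v)|=n_v(e)$; recall that $n_0(e)=0$ here, since no vertex is equidistant from the two ends of a cut edge. The goal is to prove $n_u(e;k)=\binom{n_u(e)-1}{k-1}$ and $n_v(e;k)=\binom{n_v(e)-1}{k-1}$, after which substituting into the definition of $Sz_k$ gives the stated formula at once.

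To establish the count I would classify each candidate set $S'$ (a $(k-1)$-subset of $V(T)\setminus\{u,v\}$) by how its elements are distributed across the cut. Write $S'=S'_u\cup S'_v$ with $S'_u\subseteq V(T_u)\setminus\{u\}$ and $S'_v\subseteq V(T_v)\setminus\{v\}$. The key tool is that any subtree of $T$ meeting both $T_u$ and $T_v$ must contain the edge $e$, and hence both of its endpoints $u$ and $v$. I would then compute $d_T(S'\cup\{u\})$ and $d_T(S'\cup\{v\})$ by describing the unique Steiner tree explicitly in each case.

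There are three cases. If $S'_v=\varnothing$ (all of $S'$ lies in $T_u$), the Steiner tree for $S'\cup\{u\}$ stays inside $T_u$, while the Steiner tree for $S'\cup\{v\}$ is obtained from it by appending the edge $e$; hence $d_T(S'\cup\{u\})=d_T(S'\cup\{v\})-1$ and $S'\in N_u(e;k)$. Symmetrically, if $S'_u=\varnothing$ then $S'\in N_v(e;k)$. The delicate case, which I expect to be the main obstacle, is when both $S'_u$ and $S'_v$ are nonempty: here I must show the two Steiner distances are equal, so that $S'$ contributes to neither $N_u(e;k)$ nor $N_v(e;k)$. The argument is that in both $S'\cup\{u\}$ and $S'\cup\{v\}$ the unique Steiner tree decomposes as the minimal subtree of $T_u$ spanning $\{u\}\cup S'_u$, plus the edge $e$, plus the minimal subtree of $T_v$ spanning $\{v\}\cup S'_v$, because connecting the two sides forces both $u$ and $v$ into the tree regardless of which of them was the prescribed vertex. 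The two sizes therefore coincide, so these straddling sets populate $N_0(e;k)$, which — unlike the classical $N_0(e\mid T)$ — is generally nonempty.

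Combining the cases, $N_u(e;k)$ consists precisely of the $(k-1)$-subsets of $V(T_u)\setminus\{u\}$, of which there are $\binom{n_u(e)-1}{k-1}$, and likewise $n_v(e;k)=\binom{n_v(e)-1}{k-1}$. Plugging these into $Sz_k(T)=\sum_{e=uv\in E(T)}(n_u(e;k)+1)(n_v(e;k)+1)$ yields the theorem. The only boundary point worth checking is that when $k-1>n_u(e)-1$ the coefficient $\binom{n_u(e)-1}{k-1}$ vanishes, meaning that cut admits no admissible sets on the $u$-side, which is exactly what the formula records.
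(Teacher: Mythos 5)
Your proposal is correct and follows essentially the same route as the paper's proof: partition $T-e$ into $T_u$ and $T_v$, observe that a $(k-1)$-set lying entirely in $T_u-u$ (resp.\ $T_v-v$) belongs to $N_u(e;k)$ (resp.\ $N_v(e;k)$) while any set meeting both sides gives equal Steiner distances, and then count $\binom{n_u(e)-1}{k-1}$ and $\binom{n_v(e)-1}{k-1}$. Your write-up merely supplies the explicit Steiner-tree decompositions that the paper leaves implicit.
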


Note that for $k=2$, $Sz_2(T)=\sum_{e=uv\in E(T)} n_u(e) n_v(e)=Sz(T),$ which is exactly the classical Szeged index.

\begin{proof}
Let $T_u$ and $T_v$ be the two components of $T-e$. For any
$(k-1)$-subset $S$ of $V(T)\setminus \{u,v\}$, if both $S\cap
T_u\neq \emptyset$ and  $S\cap T_v\neq \emptyset$, then $d_T(S\cup
u)=d_T(S\cup v)$. So, $d_T(S\cup u)<d_T(S\cup v)$ if and only if $S$
is in $T_u-u$, and $d_T(S\cup v)<d_T(S\cup u)$ if and only if $S$ is
in $T_v-v$. Since $T_u-u$ and $T_v-v$ have $n_u(e)-1$ and $n_v(e)-1$
vertices, respectively, we are thus done.
\end{proof}

Some examples are given as follows.
\begin{example}
For a path $P_n=u_1u_2\cdots u_iu_{i+1}\cdots u_n$ on vertices, take an edge $e=u_iu_{i+1}$. Then $P_n-e$ has two subpaths
$P_i$ and $P_{n-i}$. So we have $n_{u_i}(e)=i$ and $n_{u_{i+1}}(e)=n-i$. Therefore,
$$
Sz_{k}(P_n)=\sum_{i=1}^{n-1} \left(\displaystyle{i-1 \choose
k-1}+1\right) \left(\displaystyle{n-i-1\choose k-1}+ 1\right).
$$
Since any $(k-1)$-subset $S$ of $V(T)\setminus \{u,v\}$ satisfies
$d_T(S\cup u)=d_T(S\cup v)$ if and only if both $S\cap T_u\neq
\emptyset$ and  $S\cap T_v\neq \emptyset,$  then we can deduce that
$$
n_0(e;k)=\sum_{j=1}^{k-2} \displaystyle{i-1 \choose j} \displaystyle{n-i-1\choose k-j-1}.
$$
From this one can give an explicit formula for the $rSz_{k}(P_n)$.
\end{example}

\begin{example}
For the star graph $S_{n+1}$ on $n+1$ vertices with a central vertex $u$ and the other pendant vertices $u_1, u_2, \cdots, u_n$,
take an edge $e=uu_i$. Then $S_{n+1}-e$ has two subgraphs
$T_{u_i}=P_1$ and $T_u=S_n$. So we have $n_{u_i}(e)=1$ and $n_u(e)=n$. Therefore,
$$Sz_{k}(S_{n+1})=\sum_{i=1}^{n} \left(\displaystyle{n-1\choose k-1}+
1\right)= n \displaystyle{n-1\choose k-1} +n.$$
Since any $(k-1)$-subset $S$ of $V(S_{n+1})\setminus \{u,u_i\}$
satisfies $d_T(S\cup u)=d_T(S\cup u_i)$ if and only if both $S\cap
T_u\neq \emptyset$ and $S\cap T_{u_i}\neq \emptyset,$ then we have
$n_0(e;k)=0$ for any $e=uu_i$ because $T_{u_i}-u_i=\emptyset$, and
hence there is no such an $S$. Therefore, we have
$$rSz_k(S_{n+1})=Sz_k(S_{n+1}) = n \displaystyle{n-1\choose k-1} +n.$$
This will be re-obtained next section by using symmetry on graphs.
\end{example}

\noindent {\bf Conjecture:} For any two trees $T$ and $T'$,
$Sz_k(T)\leq Sz_k(T')$ if and only if $Sz(T)\leq Sz(T')$ ?

\section{Results for graphs with symmetry}

Let $G$ be a group and $\Omega$ be a non-empty set. An action of $G$
on $\Omega$, denoted by $(G|\Omega)$, induces a group homomorphism
$\varphi$ from $G$ into the symmetric group $S_{\Omega }$ on
$\Omega$, where $\varphi(g)^{\alpha }=g^{\alpha},~ (\alpha \in
\Omega)$. The \emph{orbit} of an element $\alpha \in \Omega $ is
denoted by $\alpha^{G}$ and it is defined as the set of all
$\alpha^{g}, g\in G$.

A bijection $\sigma$ on the vertex set of a graph $\Gamma$ is named an
\emph{graph automorphism} if it preserves the edge set of $\Gamma$. In other
words, $\sigma$ is a graph automorphism of $\Gamma$ if $e=uv$ is an edge of $\Gamma$ if and only if
$\sigma(e)=\sigma(u)\sigma(v)$ is an edge of $\Gamma$. Let
$Aut(\Gamma)$ be the set of all graph automorphisms of $\Gamma$. Then $Aut(\Gamma)$ under
the composition of mappings forms a group. A graph $\Gamma$ is
called \emph{vertex-transitive} if $Aut(\Gamma)$ acting on $V(G)$ has one orbit. We can
similarly define an edge-transitive graph just by considering $Aut(\Gamma)$ acting on $E(G)$.

By a minimal tree for a sequence of vertices $(v_{1},\cdots, v_{n})$, we mean a
tree containing the vertices $(v_{1},\cdots, v_{n})$ which has the
minimum number of edges.

\begin{theorem}\label{th2-1}
Let $E_{1},\cdots, E_{r}$ be the orbits of a graph $\Gamma$ under the
action of $Aut(\Gamma)$ on the edge set $E(\Gamma)$ of $\Gamma$. Suppose $e=uv$ and
$f=xy$ are two arbitrary edges of $E_{i}$ $(1\leq i \leq r)$. Then
$\lbrace n_{u},n_{v} \rbrace = \lbrace n_{x},n_{y} \rbrace $.
\end{theorem}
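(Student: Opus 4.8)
The plan is to exploit the fact that every graph automorphism preserves distances, combined with the observation that two edges in the same orbit are related by such an automorphism. First I would record the elementary distance-preserving property: if $\sigma\in Aut(\Gamma)$, then $d_\Gamma(\sigma(a),\sigma(b))=d_\Gamma(a,b)$ for all $a,b\in V(\Gamma)$. This holds because $\sigma$ sends any walk to a walk of the same length and preserves adjacency in both directions, so a shortest $a$--$b$ path is carried to a shortest $\sigma(a)$--$\sigma(b)$ path, and conversely; hence shortest-path lengths are unchanged.

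Next, since $e=uv$ and $f=xy$ lie in the same orbit $E_i$, by the very definition of an orbit there is an automorphism $\sigma\in Aut(\Gamma)$ with $\sigma(e)=f$, i.e.\ $\sigma$ maps the unordered pair $\{u,v\}$ onto $\{x,y\}$. Because $\Gamma$ is simple (so $u\neq v$ and $x\neq y$), exactly one of two cases occurs: either (i) $\sigma(u)=x$ and $\sigma(v)=y$, or (ii) $\sigma(u)=y$ and $\sigma(v)=x$.

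The core step is to show that $\sigma$ restricts to a bijection from $N_u(e|\Gamma)$ onto $N_{\sigma(u)}(f|\Gamma)$. Indeed, for any vertex $w$ the distance-preserving property gives $d_\Gamma(w,u)<d_\Gamma(w,v)$ if and only if $d_\Gamma(\sigma(w),\sigma(u))<d_\Gamma(\sigma(w),\sigma(v))$; since $\{\sigma(u),\sigma(v)\}=\{x,y\}$, this says precisely that $w\in N_u(e|\Gamma)$ iff $\sigma(w)\in N_{\sigma(u)}(f|\Gamma)$. As $\sigma$ is a bijection of $V(\Gamma)$, it follows that $n_u=n_{\sigma(u)}$, and the same argument with the roles of $u$ and $v$ interchanged yields $n_v=n_{\sigma(v)}$.

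Finally I would read off the conclusion in each case: in case (i) we obtain $n_u=n_x$ and $n_v=n_y$, and in case (ii) we obtain $n_u=n_y$ and $n_v=n_x$; in both situations the unordered pairs coincide, giving $\{n_u,n_v\}=\{n_x,n_y\}$, as claimed. There is no serious obstacle here; the only point demanding a little care is the two-case split over how $\sigma$ identifies the endpoints, since the theorem asserts equality only of the \emph{unordered} pairs (one need not have $n_u=n_x$). Keeping the statement unordered is exactly what makes both cases yield the same conclusion.
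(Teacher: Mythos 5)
Your overall strategy is the same as the paper's: pick an automorphism $\sigma$ carrying $e$ to $f$ and show it induces a bijection between the sets being counted. The one substantive issue is what the symbols $n_u,n_v,n_x,n_y$ mean. You read them as the classical quantities $n_u(e)=|N_u(e|\Gamma)|$ (vertices closer to $u$ than to $v$), and for that reading your proof is complete and correct. But in context the theorem is the engine behind Corollary~\ref{cor2-2}, which needs the \emph{Steiner} quantities $n_u(e;k)=|N_u(e;k)|$, where the objects counted are $(k-1)$-subsets $S'\subseteq V(\Gamma)\setminus\{u,v\}$ with $d_\Gamma(S'\cup\{u\})<d_\Gamma(S'\cup\{v\})$; the paper's own proof accordingly works with minimal Steiner trees. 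As written, your argument only establishes the $k=2$ instance.

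The fix is short and stays entirely within your framework: upgrade your distance-preservation lemma to the statement that automorphisms preserve Steiner distance, i.e.\ $d_\Gamma(\sigma(S))=d_\Gamma(S)$ for every $S\subseteq V(\Gamma)$, which holds because $\sigma$ carries any tree containing $S$ to a tree of the same size containing $\sigma(S)$, and $\sigma^{-1}$ does the reverse. Then replace ``vertex $w$'' by ``$(k-1)$-subset $S'$'' throughout: $S'\in N_u(e;k)$ if and only if $\sigma(S')\in N_{\sigma(u)}(f;k)$, and since $\sigma$ induces a bijection on $(k-1)$-subsets avoiding the endpoints, $n_u(e;k)=n_{\sigma(u)}(f;k)$. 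Your two-case analysis of whether $\sigma(u)=x$ or $\sigma(u)=y$ then finishes exactly as you wrote it; indeed that case split is handled more carefully in your write-up than in the paper, which simply asserts $\varphi(u)=x$ and $\varphi(v)=y$ without justifying that the endpoints can be matched in that order.
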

\begin{proof}
Since $e$ and $f$ are in the same obit, there is an automorphism $\varphi\in Aut(\Gamma)$ such that
$\varphi(u)=x$ and $\varphi(v)=y$. For every minimal tree $T$
containing the vertices $(u,u_{1}, \cdots, u_{k-1})$, $\varphi(T)$
is a tree that contains $(x,\varphi(u_{1}), \cdots,
\varphi(u_{k-1}))$. This means that $\varphi(T)\in
\mathcal{S}_{k}^{\varphi(u)}$, and thus $n_{u}= \vert
\mathcal{S}_{k}^{u}\vert = \vert \mathcal{S}_{k}^{x}\vert = n_{x}$.
By a similar argument, one can see that $n_{v}= \vert
\mathcal{S}_{k}^{v}\vert = \vert \mathcal{S}_{k}^{y}\vert = n_{y}$.
This means that $\lbrace n_{u},n_{v} \rbrace = \lbrace n_{x},n_{y} \rbrace.$
\end{proof}

The following corollary is immediate.
\begin{corollary}\label{cor2-2}
Let $E_{1},\cdots, E_{r}$ be the orbits of a graph $\Gamma$ under the
action of $Aut(\Gamma)$ on the edge set $E(\Gamma)$ of $\Gamma$ and
$u_{i}v_{i}=e_{i}\in E_{i}$. Then
$$Sz_{k}(\Gamma) = \sum_{i=1}^{r} \vert E_{i} \vert (n_{u_i}(e_i;k)+1)(n_{v_{i}}(e_i;k)+1),$$
and
$$rSz_{k}(\Gamma) = \sum_{i=1}^{r} \vert E_{i} \vert (n_{u_i}(e_i;k)+n_0(e_i;k)/2+1)(n_{v_i}(e_i;k)+n_0(e_i;k)/2+1).$$
\end{corollary}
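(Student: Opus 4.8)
The plan is to start from the defining expressions for $Sz_k(\Gamma)$ and $rSz_k(\Gamma)$ and regroup the edge-sum according to the orbit decomposition supplied by the hypothesis. Since the orbits $E_1,\dots,E_r$ of the $Aut(\Gamma)$-action partition $E(\Gamma)$, I would first rewrite
$$
Sz_k(\Gamma)=\sum_{e=uv\in E(\Gamma)}(n_u(e;k)+1)(n_v(e;k)+1)=\sum_{i=1}^{r}\sum_{e=uv\in E_i}(n_u(e;k)+1)(n_v(e;k)+1),
$$
and analogously for the revised index. The problem then reduces to showing that the summand is constant across each orbit $E_i$, so that the inner sum collapses to $|E_i|$ times its value at the chosen representative $e_i=u_iv_i$.

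For the plain Steiner Szeged index, this constancy is exactly where I would invoke Theorem~\ref{th2-1}: for any $e=uv$ lying in the same orbit $E_i$ as the representative $e_i=u_iv_i$, we have equality of the unordered pairs $\{n_u(e;k),n_v(e;k)\}=\{n_{u_i}(e_i;k),n_{v_i}(e_i;k)\}$. Because the product $(a+1)(b+1)$ is symmetric in $a$ and $b$, equality of the unordered pairs already forces equality of the two summands. I would stress that no consistent labeling of endpoints across the orbit is required, precisely because of this symmetry; this immediately gives the first formula.

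For $rSz_k(\Gamma)$ the summand additionally involves $n_0(e;k)$, so I would supplement Theorem~\ref{th2-1} with the observation that $n_0$ is likewise orbit-invariant. The argument is the same one used there: an automorphism $\varphi$ carrying $e$ to $e_i$ preserves Steiner distances, so $S'\mapsto\varphi(S')$ is a bijection from $N_0(e;k)$ onto $N_0(e_i;k)$ — the defining condition $d_\Gamma(S'\cup\{u\})=d_\Gamma(S'\cup\{v\})$ is symmetric in $u,v$ and preserved by $\varphi$ — whence $n_0(e;k)=n_0(e_i;k)$. With all three quantities constant on $E_i$, and the revised summand again symmetric in the roles of $u$ and $v$, it is constant on each orbit, and the second formula follows by the same collapse. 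I expect the only genuine subtlety, rather than a real obstacle, to be this bookkeeping step: since Theorem~\ref{th2-1} is phrased only for the unordered pair $\{n_u,n_v\}$, one must separately record the invariance of $n_0$ and verify that the symmetry of the summands makes the unordered data sufficient.
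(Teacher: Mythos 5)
Your proposal is correct and follows exactly the route the paper intends: the paper states the corollary as an immediate consequence of Theorem~\ref{th2-1} without writing out a proof, and your argument (partition the edge sum by orbits, use the orbit-invariance of the unordered pair $\{n_u,n_v\}$ together with the symmetry of the summand, and separately record the invariance of $n_0$ for the revised index) is precisely the bookkeeping being suppressed. Your explicit verification that $n_0(e;k)$ is constant on each orbit is a worthwhile addition, since Theorem~\ref{th2-1} as stated does not cover it.
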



\begin{example}
Suppose $K_{n}$ is the complete graph on $n$ vertices. It is not
difficult to see that for any $uv=e \in E(K_{n})$, we have
$n_u(e;k)=n_v(e;k)=0$ and $n_0(e;k)=\displaystyle{n-2\choose k-1}$. Then
$$Sz_{k}(K_{n})=\sum_{e=uv\in E(K_{n})} (n_u(e;k)+1)(n_v(e:k)+1) = \vert E(K_{n}) \vert = n(n-1)/2,$$
and
\begin{eqnarray*}
rSz_{k}(K_{n})&=&\sum_{e=uv\in E(K_{n})} (n_u(e;k)+n_0(e;k)/2+1)(n_v(e;k)+n_0(e;k)/2+1)\\
&=& \vert E(K_{n}) \vert\displaystyle{n-2\choose k-1}^{2}.
\end{eqnarray*}
\end{example}
\begin{example}
Suppose $K_{1,n}$ is the star graph on $n+1$ vertices. Let $V(K_{1,n})= \lbrace u, u_{1}, \cdots, u_{n} \rbrace $
and $E(K_{1,n})= \lbrace \lbrace u,u_{1} \rbrace, \cdots, \lbrace
u,u_{n} \rbrace \rbrace $. Again $K_{1,n}$ is edge-transitive and
for any edge $uu_{i}=e_{i} \in E(K_{1,n})$, we have $n_u(e;k)=\displaystyle{n-1\choose k-1}$, $n_{u_i}(e;k)=0$ and $n_0(e;k)=0$. Then
$$rSz_{k}(K_{1,n})=Sz_{k}(K_{1,n})=\sum_{e\in E(K_{n})} \left(\displaystyle{n-1\choose k-1} +1\right) =  n \displaystyle{n-1\choose k-1} +n.$$
See Example 2.2, we get the same result.
\end{example}

For complete multipartite graphs, we can get the exact value for the
$k$th Steiner Szeged index.
\begin{theorem}\label{1}
Let $\Gamma=K_{a_1, a_2,\ldots, a_m}$ be a complete multipartite
graph and let $k$ be an integer such that $k\leq a_i ~ (1 \leq i
\leq m)$. Then
\begin{equation*}
 Sz_k(\Gamma)=\displaystyle\sum_{i=1}^{m-1}\displaystyle\sum_{j=i+1}^{m}a_ia_j\left(\binom{a_i-1} {k-1}+1\right)
\left(\binom{a_j-1} {k-1}+1\right),
\end{equation*}
and
\begin{equation*}
rSz_k(\Gamma)=\displaystyle\sum_{i=1}^{m-1}\displaystyle\sum_{j=i+1}^{m}a_ia_j\left(\binom{a_i-1} {k-1}+n_0(e;k)/2+1\right)
\left(\binom{a_j-1} {k-1}+n_0(e;k)/2+1\right),
\end{equation*}
where $B=V(\Gamma)-(A_i\cup A_j)$ and
\begin{equation*}
 n_0(e;k)=\binom{|B|} {k-1}+\displaystyle\sum_{p=1}^{a_i-2} \displaystyle\sum_{q=1}^{k-1-p} \binom{a_i-2} {p}\binom{a_j-1} {q}
\binom{|B|} {k-1-(p+q)}.
\end{equation*}

\end{theorem}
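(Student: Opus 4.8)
The plan is to reduce everything to one structural fact about Steiner distances in complete multipartite graphs and then feed it into the edge-orbit machinery of Corollary~\ref{cor2-2}. First I would prove the key lemma: for a vertex set $S$ of $\Gamma=K_{a_1,\ldots,a_m}$ with $|S|\geq 2$, one has $d_\Gamma(S)=|S|$ when $S$ lies inside a single part $A_\ell$, and $d_\Gamma(S)=|S|-1$ otherwise. The lower bound $d_\Gamma(S)\geq|S|-1$ is the general inequality recorded in the introduction. If $S$ meets at least two parts, the subgraph induced on $S$ is again a complete multipartite graph with at least two nonempty parts, hence connected, so a spanning tree of it realizes $|S|-1$ edges and the bound is attained. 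If instead $S\subseteq A_\ell$, then no two vertices of $S$ are adjacent, so any connecting tree needs a Steiner vertex outside $A_\ell$; for any such vertex $w$ the star joining $w$ to all of $S$ uses $|S|$ edges, and a short leaf-counting argument shows one cannot do better, whence $d_\Gamma(S)=|S|$.

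Next I would fix an edge $e=uv$ with $u\in A_i$ and $v\in A_j$ ($i\neq j$) and a $(k-1)$-subset $S'\subseteq V(\Gamma)\setminus\{u,v\}$. Both $S'\cup\{u\}$ and $S'\cup\{v\}$ have size $k$, so by the lemma each of $d_\Gamma(S'\cup\{u\})$ and $d_\Gamma(S'\cup\{v\})$ equals $k-1$ or $k$, with the value $k$ occurring exactly when the enlarged set lies in one part. Since $u\in A_i$ and $v\notin S'$, the set $S'\cup\{u\}$ lies in a single part iff $S'\subseteq A_i$, and likewise $S'\cup\{v\}$ lies in one part iff $S'\subseteq A_j$; these two events are mutually exclusive because $A_i\cap A_j=\emptyset$ and $S'\neq\emptyset$. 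Comparing distances then gives $d_\Gamma(S'\cup\{u\})<d_\Gamma(S'\cup\{v\})$ iff $S'\subseteq A_j\setminus\{v\}$, and the reverse inequality iff $S'\subseteq A_i\setminus\{u\}$. Hence $n_u(e;k)=\binom{a_j-1}{k-1}$ and $n_v(e;k)=\binom{a_i-1}{k-1}$, while $n_0(e;k)$ counts exactly the $(k-1)$-subsets of $V(\Gamma)\setminus\{u,v\}$ that are confined to neither $A_i$ nor $A_j$.

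For the summation I would note that these values depend only on the unordered pair $\{a_i,a_j\}$ and not on the chosen endpoints, which is consistent with Corollary~\ref{cor2-2} since the $a_ia_j$ edges joining $A_i$ and $A_j$ lie in one $Aut(\Gamma)$-orbit (permute within parts). Substituting into $Sz_k(\Gamma)=\sum_{e=uv}(n_u(e;k)+1)(n_v(e;k)+1)$ and collecting the $a_ia_j$ identical terms for each pair $i<j$ yields the first claimed double sum, and substituting the same quantities into the definition of $rSz_k$ yields the second, the product being symmetric in its two factors. The hypothesis $k\leq a_i$ for every $i$ is what makes each $\binom{a_i-1}{k-1}$ meaningful.

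The main obstacle is the explicit evaluation of $n_0(e;k)$. I would compute it by classifying each $(k-1)$-subset $S'$ by the triple $(p,q,r)$ recording how many of its elements lie in $A_i\setminus\{u\}$, in $A_j\setminus\{v\}$, and in $B=V(\Gamma)\setminus(A_i\cup A_j)$, with $p+q+r=k-1$, and then summing $\binom{a_i-1}{p}\binom{a_j-1}{q}\binom{|B|}{r}$ over the admissible triples, namely those with $q+r\geq 1$ (so $S'\not\subseteq A_i$) and $p+r\geq 1$ (so $S'\not\subseteq A_j$). The delicate point, and the easiest place to slip, is to retain every triple with $r\geq 1$, including those with $p=0$ or $q=0$, since one vertex in $B$ already forces $S'$ out of both parts. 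A clean cross-check is the complementary count $n_0(e;k)=\binom{|V(\Gamma)|-2}{k-1}-\binom{a_i-1}{k-1}-\binom{a_j-1}{k-1}$, obtained by removing from all $(k-1)$-subsets of $V(\Gamma)\setminus\{u,v\}$ the two disjoint single-part families.
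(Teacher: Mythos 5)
Your argument is correct, and for the main $Sz_k$ formula it follows essentially the same route as the paper: fix an edge $e=uv$ with $u\in A_i$, $v\in A_j$, decide for each $(k-1)$-set $S'$ which of $d_\Gamma(S'\cup\{u\})$, $d_\Gamma(S'\cup\{v\})$ is smaller, and group the $a_ia_j$ identical edge contributions for each pair of parts. Where you improve on the paper is the key lemma ($d_\Gamma(S)=|S|$ if $S$ lies in one part, $|S|-1$ otherwise): the paper derives the same dichotomy by exhibiting explicit Steiner trees in each of several cases and never isolates the clean principle, so your version is shorter and less error-prone. Your values $n_u(e;k)=\binom{a_j-1}{k-1}$ and $n_v(e;k)=\binom{a_i-1}{k-1}$ agree with the paper's.

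The substantive divergence is $n_0(e;k)$, and there your count is right while the printed formula (and the paper's own derivation of it) is not. As you say, $n_0(e;k)$ is the sum of $\binom{a_i-1}{p}\binom{a_j-1}{q}\binom{|B|}{r}$ over all triples $p+q+r=k-1$ except $(k-1,0,0)$ and $(0,k-1,0)$, which collapses to $\binom{n-2}{k-1}-\binom{a_i-1}{k-1}-\binom{a_j-1}{k-1}$. The theorem's formula keeps only the triples with $p=q=0$ (the $\binom{|B|}{k-1}$ term) and those with $p\geq 1$ and $q\geq 1$; it omits every triple with $r\geq 1$ and exactly one of $p,q$ equal to zero --- precisely the pitfall you flagged --- and it also uses $\binom{a_i-2}{p}$ where $\binom{a_i-1}{p}$ is required, since $A_i\setminus\{u\}$ has $a_i-1$ vertices. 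A concrete check: for $K_{3,3,3}$ with $k=3$ the true value is $\binom{7}{2}-1-1=19$, whereas the printed expression gives $3+2=5$. So your proof establishes the stated $Sz_k$ formula and a \emph{corrected} version of the $rSz_k$ formula; do not contort the $n_0$ computation to match the statement, but instead record your closed form and note the discrepancy.
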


\begin{proof}
For $\Gamma=K_{a_1, a_2,\ldots , a_m}$, let $A_t \ (1\leq  t \leq m)$
be the multi-partition of $\Gamma$ such that $A_t=\{ a_{t1}, a_{t2}, \ldots ,
a_{ta_t}\}$. Consider two different parts $A_i$ and $A_j$, where
$1\leq i, j \leq m$ and $a_i \leq a_j$. First, let $k \leq a_i$ and
consider the edge $e=uv$ such that $u\in A_i$ and $v\in A_j$.
Suppose that $W \subseteq V(\Gamma)$, where $|W|=k-1$. Let
$W\subseteq A_i$ such that $u \notin W$ and without loss of
generality, we can suppose $W=\{ a_{i1}, a_{i2}, \ldots,
a_{i(k-1)}\}$. Then the tree induced by the edges $ \{ v a_{i1},v
a_{i2},\ldots , v a_{i(k-1)} \}$ is the Steiner tree containing $u$
and the tree induced by the edges $\{ v a_{i1},v a_{i2},\ldots, v
a_{i(k-1)},vu \}$ is the Steiner tree containing $v$. So, $d_S(v) <
d_S(u)$. Similarly, if $W\subseteq A_j$, then $d_S(u) < d_S(v)$. Let
$W=\{w_1, w_2,\ldots, w_{k-1}\} \subseteq V(\Gamma)$ such that
$W\cap (A_i\cup A_j)=\phi$. So, the tree induced by the edges $\{
uw_1, uw_2,\ldots, uw_{(k-1)}\}$ is the Steiner tree containing $u$
and the tree induced by the edges $\{ vw_1, vw_2,\ldots, vw_{(k-1)}\}$
is the Steiner tree containing $v$. This means that $d_S(v) = d_S(u)$.
Also, if $|W\cap A_i |=p$, $| W\cap A_j|=q$ and $|W \cap \left(V(\Gamma)-(A_i\cup A_j)\right)|=l$, where
$W= \{ a_{i1}, a_{i2}, \ldots, a_{ip}, a_{j1}, a_{j2}, \ldots, a_{jq}, w_1, w_2, \ldots, w_l\}$ ($p+q+l=k-1$),
then $\{ua_{j1}, \ldots, ua_{jq}, uw_1, \ldots, uw_l, w_1a_{i1}, \ldots, w_1a_{ip}\}$
is the Steiner trees containing $u$ and
$\{va_{i1}, \ldots, va_{ip}, vw_1, \ldots, vw_l, w_1a_{j1}, \ldots, w_1a_{jq}\}$
is the Steiner trees containing $v$. This implies that $d_S(v) = d_S(u)$.
By the above discussion we have that $n_v(e;k)=\binom{a_i-1} {k-1}$ and $n_u(e;k)=\binom{a_j-1} {k-1}$.
So,
\begin{equation*}
Sz_k(\Gamma)=\displaystyle\sum_{i=1}^{m-1}\displaystyle\sum_{j=i+1}^{m}a_ia_j\left(\binom{a_i-1} {k-1}+1\right)
\left(\binom{a_j-1} {k-1}+1\right).
\end{equation*}

Assume that $B=V(\Gamma)-(A_i\cup A_j)$. Then $n_0(e;k)=X+Y$, where $$X=
\binom{|B|} {k-1} ~and ~ Y=\displaystyle\sum_{p=1}^{a_i-2}
\displaystyle\sum_{q=1}^{k-1-p} \binom{a_i-2} {p}\binom{a_j-1} {q}
\binom{|B|} {k-1-(p+q)}.$$
This completes the proof.
\end{proof}

\section{Formulas for large $k$}

For trees, we have the following formula for $k=n-1$.
\begin{theorem}\label{pen}
Let $T$ be a tree of order $n$ with $p$ pendent edges. Then
\begin{equation*}
Sz_{n-1}(T)=n+p-1
\end{equation*}
and
\begin{equation*}
rSz_{n-1}(T)=2p+\frac{9}{4}(n-p-1).
\end{equation*}
\end{theorem}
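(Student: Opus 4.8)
The plan is to use Theorem~\ref{2} with $k=n-1$ and analyze the binomial coefficients $\binom{n_u(e)-1}{n-2}$ that appear there. For an edge $e=uv$ of $T$, the two components $T_u$ and $T_v$ of $T-e$ satisfy $n_u(e)+n_v(e)=n$, so $\binom{n_u(e)-1}{n-2}$ is nonzero only when $n_u(e)-1\geq n-2$, i.e.\ $n_u(e)\geq n-1$, which forces $n_u(e)=n-1$ and $n_v(e)=1$; in that case the binomial equals $1$, and otherwise it equals $0$. Since $n_v(e)=1$ exactly when $v$ is a leaf (its side of $T-e$ is a single vertex), the factor $\binom{n_u(e)-1}{n-2}+1$ jumps from $1$ to $2$ precisely at the pendent edges. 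So I would first split $E(T)$ into pendent edges and internal edges, observing that a pendent edge has exactly one endpoint-side of size $1$ while an internal edge has both sides of size $\geq 2$.

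For the Szeged formula, each internal edge contributes $(0+1)(0+1)=1$, and each of the $p$ pendent edges contributes $(1+1)(0+1)=2$. With $|E(T)|=n-1$ and $p$ pendent edges, the internal edges number $n-1-p$, giving
\begin{equation*}
Sz_{n-1}(T)=2p+(n-1-p)=n+p-1,
\end{equation*}
as claimed. I would present this counting cleanly as the first half of the proof.

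For the revised index the structure is the same, but I must account for $n_0(e;n-1)$. Here the approach is to compute $n_0(e;n-1)$ directly from the combinatorial description: a $(k-1)$-subset $S$ of $V(T)\setminus\{u,v\}$ has $d_T(S\cup u)=d_T(S\cup v)$ iff $S$ meets both $T_u-u$ and $T_v-v$. Since $n_u(e;n-1)+n_v(e;n-1)+n_0(e;n-1)=\binom{n-2}{n-2}=1$, there is exactly one $(n-2)$-subset of $V(T)\setminus\{u,v\}$ in total, namely $V(T)\setminus\{u,v\}$ itself, and I would determine for each edge type into which of the three sets it falls. For an internal edge this single subset meets both sides, so $n_u=n_v=0$ and $n_0=1$, giving a revised contribution $(0+\tfrac12+1)^2=\tfrac94$. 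For a pendent edge with $v$ the leaf, $T_v-v=\emptyset$, so the subset lies entirely in $T_u-u$, giving $n_u=1$, $n_v=0$, $n_0=0$ and a contribution $(1+0+1)(0+0+1)=2$. Summing yields
\begin{equation*}
rSz_{n-1}(T)=2p+\tfrac94(n-1-p),
\end{equation*}
which is the stated formula.

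The only point requiring care—and the main obstacle—is the bookkeeping of $n_0(e;n-1)$, since the revised index is sensitive to it while the ordinary Szeged index is not; in particular I must verify that $n_0=0$ for pendent edges (because one side is empty, so no subset can straddle both sides) and $n_0=1$ for internal edges. Once the identity $n_u(e;n-1)+n_v(e;n-1)+n_0(e;n-1)=1$ is in hand, the rest is a short case split on whether an edge is pendent or internal, so I expect no serious technical difficulty beyond stating this partition precisely.
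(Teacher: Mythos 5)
Your proof is correct and follows essentially the same route as the paper: a case split on pendent versus internal edges, determining $n_u(e;n-1)$, $n_v(e;n-1)$ and $n_0(e;n-1)$ for each type and summing the contributions. In fact your write-up is more complete than the paper's, since you explicitly justify the values via Theorem~\ref{2} and the observation that there is only one $(n-2)$-subset of $V(T)\setminus\{u,v\}$, and you carry out the $n_0$ bookkeeping (which is needed for the $rSz_{n-1}$ formula) that the paper's proof leaves implicit.
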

\begin{proof}
Let $e=uv$ be an edge of $T$. If $e$ is not a leaf, then
$|N_u(e;n-1)|=|N_v(e;n-1)|=0$. Suppose $e$ is a leaf and $u$ is a
pendent vertex. Then $v$ is a cut vertex. Then $|N_v(e;n-1)|=1$ and
$|N_u(e;n-1)|=0$, and hence
$$
Sz_{n-1}(T)=(n-1-p)+2p=n+p-1
$$
and
$$
rSz_{n-1}(T)=2p+\frac{9}{4}(n-p-1).
$$
\end{proof}

\noindent \textbf{Remark 1.} Notice that the derivative function
$(rSz_{n-1})^{'}(T)$ is less than zero thus the function
$rSz_{n-1}(T)=2p+\frac{9}{4}(n-p-1)$ is strictly incrasing. Let
$\mathcal{T}_n$ be all of trees with $n$ vertices. Among all
elements of $\mathcal{T}_n$,  the star graph $S_n$ and the path
graph $P_n$ has the minimum and the maximum velue of $rSz_{n-1}$,
respectively.\\

The following observation is immediate for $k=n-1$.
\begin{theorem}\label{pen}
Let $G$ be a connected graph of order $n$ and size $m$ with $p$
pendent edges. Then
$$
Sz_{n-1}(G)=p+m.
$$
and
\begin{equation*}
rSz_{n-1}(G)=2p+\frac{9}{4}(m-p).
\end{equation*}
\end{theorem}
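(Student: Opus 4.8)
The plan is to exploit the fact that for $k=n-1$ the family of sets $S'$ collapses to a single set, so that the whole index reduces to a sum of per-edge contributions, each governed by one comparison of Steiner distances.

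First I would observe that for $k=n-1$ every $S'$ occurring in the definitions of $n_u(e;k)$, $n_v(e;k)$, $n_0(e;k)$ has size $k-1=n-2$ and is a subset of $V(G)\setminus\{u,v\}$, a set of exactly $n-2$ vertices. Hence there is a unique admissible set, namely $S'=V(G)\setminus\{u,v\}$. Writing $e=uv$, the only quantities to compare are therefore $d_G(S'\cup\{u\})=d_G(V(G)\setminus\{v\})$ and $d_G(S'\cup\{v\})=d_G(V(G)\setminus\{u\})$. Consequently each of $n_u(e;n-1)$, $n_v(e;n-1)$, $n_0(e;n-1)$ is either $0$ or $1$, and exactly one of them equals $1$.

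Second, I would prove the key lemma describing $d_G(V(G)\setminus\{w\})$ for a single vertex $w$: it equals $n-2$ when $w$ is not a cut vertex and $n-1$ when $w$ is a cut vertex. Indeed, if $G-w$ is connected then a spanning tree of $G-w$ is a tree on $V(G)\setminus\{w\}$ with $n-2$ edges, and any tree that also contains $w$ must span all $n$ vertices and so has $n-1$ edges; if $G-w$ is disconnected then every connected subgraph containing $V(G)\setminus\{w\}$ must use $w$ to join the pieces, hence spans all $n$ vertices and needs $n-1$ edges. Combining these, for $e=uv$ we get $n_u(e;n-1)=1$ exactly when $v$ is a cut vertex and $u$ is not, $n_v(e;n-1)=1$ exactly when $u$ is a cut vertex and $v$ is not, and $n_0(e;n-1)=1$ exactly when $u$ and $v$ have the same cut-vertex status. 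Substituting into the definitions gives, per edge, a contribution of $(1+1)(0+1)=2$ to $Sz_{n-1}$ when precisely one endpoint is a cut vertex and $(0+1)(0+1)=1$ otherwise; for $rSz_{n-1}$ the corresponding contributions are $2$ and $\bigl(\tfrac12+1\bigr)^2=\tfrac94$. Summing over all edges yields $Sz_{n-1}(G)=2q+(m-q)=m+q$ and $rSz_{n-1}(G)=2q+\tfrac94(m-q)$, where $q$ is the number of edges joining a cut vertex to a non-cut vertex.

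The hard part, and the step I expect to be the main obstacle, is the final combinatorial identification $q=p$. For a tree this is transparent, since there the non-cut vertices are exactly the leaves and every edge incident to a leaf is pendent, so the mixed edges are precisely the pendent edges, which is exactly how Theorem \ref{pen} for trees goes through. In a general connected graph, however, a non-cut vertex need not be a leaf and a single cut vertex may be adjacent to several non-cut vertices along \emph{non-pendent} edges, so one cannot read off $q=p$ directly; establishing (or, if necessary, correctly adjusting) this identity is where the real content of the statement lies, and it is the point at which the clean tree argument must be genuinely supplemented rather than merely imitated.
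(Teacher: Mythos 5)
Your reduction to the single admissible set $S'=V(G)\setminus\{u,v\}$, your lemma that $d_G(V(G)\setminus\{w\})$ equals $n-2$ or $n-1$ according as $w$ is or is not a cut vertex, and the resulting per-edge contributions ($2$ when exactly one endpoint is a cut vertex, and $1$, resp.\ $\frac{9}{4}$, otherwise) are all correct, and this is precisely the mechanism the paper's own proof relies on. (One harmless slip: since $S'\cup\{u\}=V(G)\setminus\{v\}$, you get $n_u(e;n-1)=1$ exactly when $u$ is a cut vertex and $v$ is not, i.e.\ the roles are swapped from what you wrote; the products are symmetric in $u$ and $v$, so nothing downstream changes.)

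The obstacle you flag at the end is not a gap in your argument but an error in the theorem: the identification $q=p$ fails in general, and so does the stated formula. Take $G$ to be two triangles sharing a vertex $c$ (the bowtie), so $n=5$, $m=6$, $p=0$. The four edges incident with the cut vertex $c$ each join $c$ to a non-cut vertex and hence each contribute $2$, while the edges $ab$ and $de$ contribute $1$; thus $Sz_{4}(G)=10$, whereas the theorem predicts $p+m=6$. (Checking directly for $e=ac$ and $S'=\{b,d,e\}$: the set $\{b,c,d,e\}$ induces a connected subgraph, so $d_G(S'\cup\{c\})=3$, while any connected subgraph containing $\{a,b,d,e\}$ must pass through $c$, so $d_G(S'\cup\{a\})=4$; hence $n_c(e;4)=1$ although $e$ is not pendent.) The paper's proof is a verbatim transplant of the tree argument (it even still says ``an edge of $T$'') and rests on the unjustified, and false, claim that every non-pendent edge has $|N_u(e;n-1)|=|N_v(e;n-1)|=0$. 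Your formulas $Sz_{n-1}(G)=m+q$ and $rSz_{n-1}(G)=2q+\frac{9}{4}(m-q)$, with $q$ the number of edges joining a cut vertex to a non-cut vertex, are the correct statements; they coincide with the paper's only when every such edge is pendent, which holds for trees (where non-cut vertices are exactly the leaves) but not for general connected graphs.
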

\begin{proof}
Let $e=uv$ be an edge of $T$. If $e$ is not a pendent edge, then
$|N_u(e;n-1)|=|N_v(e;n-1)|=0$. Suppose $e$ is a pendent edge and $u$
is a pendent vertex. Then $v$ is a cut vertex. Then $|N_v(e;n-1)|=1$
and $|N_u(e;n-1)|=0$, and hence
$$
Sz_{n-1}(T)=(m-p)+2p=p+m.
$$
and
$$
rSz_{n-1}(G)=2p+\frac{9}{4}(m-p).
$$
\end{proof}

\section{Upper and lower bounds}

For general graphs, we have the following upper and lower bounds.
\begin{theorem}\label{pen}
Let $n,k$ be two integers with $2\leq k\leq n-1$, and let $G$ be a
graph of order $n$ and size $m$.

$(1)$ If $G$ is $(n-k)$-connected, then
$$
Sz_{k}(G)=m.
$$

$(2)$ If $G$ is not $(n-k)$-connected, then
$$
m\leq Sz_{k}(G)\leq m\left(\left\lceil\frac{1}{2}{n-2\choose
k-1}\right\rceil+1\right)\left(\left \lfloor\frac{1}{2}{n-2\choose
k-1}\right\rfloor+1\right).
$$
\end{theorem}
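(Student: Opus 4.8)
The plan is to reduce everything to the elementary identity
$$n_u(e;k)+n_v(e;k)+n_0(e;k)=\binom{n-2}{k-1},$$
which holds for every edge $e=uv$ because $N_u(e;k)$, $N_v(e;k)$ and $N_0(e;k)$ partition the family of all $(k-1)$-subsets of $V(G)\setminus\{u,v\}$, a set of size $n-2$. Alongside this I would record the basic fact that for a set $S$ with $|S|=k$ one has $d_G(S)=k-1$ if and only if the induced subgraph $G[S]$ is connected: a tree on $k$ vertices has exactly $k-1$ edges, so a Steiner tree of size $k-1$ can use no vertex outside $S$. These two observations are the only structural inputs; the rest is bookkeeping.

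For part $(1)$ I would argue that the connectivity hypothesis forces $n_u(e;k)=n_v(e;k)=0$ on every edge. The key step is the claim that sufficient connectivity makes every $k$-subset $S$ induce a connected subgraph, equivalently $d_G(S)=k-1$ for all such $S$. Granting this, for any edge $e=uv$ and any $(k-1)$-subset $S'$ disjoint from $\{u,v\}$, both $S'\cup\{u\}$ and $S'\cup\{v\}$ are $k$-subsets, so $d_G(S'\cup\{u\})=d_G(S'\cup\{v\})=k-1$; hence $S'\in N_0(e;k)$ and $n_u(e;k)=n_v(e;k)=0$. Every summand of $Sz_k(G)$ is then $(0+1)(0+1)=1$, giving $Sz_k(G)=m$. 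The main obstacle, and the point I would treat most carefully, is pinning down the exact connectivity threshold: if some $k$-set $S$ has $G[S]$ disconnected, then deleting the $n-k$ vertices of $V(G)\setminus S$ disconnects $G$, exhibiting a vertex cut of size $n-k$. This suggests that the ``all $k$-subsets induce connected subgraphs'' property is governed by $(n-k+1)$-connectivity, so I would verify whether the stated hypothesis should be strengthened by one; the cycle $C_n$ with $k=n-2$ (which is $(n-k)$-connected but has $Sz_k(C_n)>m$) is the natural test case to settle this.

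For part $(2)$ the lower bound is immediate: each factor $n_u(e;k)+1$ and $n_v(e;k)+1$ is at least $1$, so every term is at least $1$ and summing over the $m$ edges gives $Sz_k(G)\ge m$. For the upper bound I would bound each term separately. Writing $N=\binom{n-2}{k-1}$ and using $n_u(e;k)+n_v(e;k)\le N$ from the identity above (since $n_0(e;k)\ge 0$), the product $(n_u(e;k)+1)(n_v(e;k)+1)$ is maximized, over nonnegative integers with bounded sum, when the sum equals $N$ and the two summands are as balanced as possible, namely $\{\lceil N/2\rceil,\lfloor N/2\rfloor\}$. This yields the per-edge bound $(\lceil N/2\rceil+1)(\lfloor N/2\rfloor+1)$, and summing over all $m$ edges gives the claimed inequality. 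The only remaining things to check are that the balanced split is genuinely the integer maximizer (a short convexity argument), and that the two displayed cases are exhaustive, which is precisely where the connectivity dichotomy of part $(1)$ re-enters.
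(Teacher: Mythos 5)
Your treatment of part (2) is correct and is essentially the paper's argument, except that you actually justify the per-edge bound: the paper simply writes $(n_u(e;k)+1)(n_v(e;k)+1)\leq(\lceil N/2\rceil+1)(\lfloor N/2\rfloor+1)$ with $N=\binom{n-2}{k-1}$ and never records the partition identity $n_u(e;k)+n_v(e;k)+n_0(e;k)=\binom{n-2}{k-1}$ or the balanced-split maximization that make this true. Your version is the complete one.

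On part (1), your caution is vindicated: the theorem as stated is false, and your proposed test case works. The paper's proof asserts without justification that $(n-k)$-connectivity forces $d_G(S'\cup\{u\})=d_G(S'\cup\{v\})=k$ (which is also off by one: when all $k$-subsets induce connected subgraphs the common value is $k-1$, not $k$). As you observe, the property actually needed is that every $k$-subset induces a connected subgraph, and its correct governing threshold is $(n-k+1)$-connectivity, since a $k$-set $S$ with $G[S]$ disconnected exhibits the vertex cut $V(G)\setminus S$ of size $n-k$. The cycle $C_5$ with $k=3$ settles the matter: it is $2$-connected, i.e.\ $(n-k)$-connected, yet for the edge $e=v_1v_2$ the set $S'=\{v_4,v_5\}$ gives $d(\{v_1,v_4,v_5\})=2<3=d(\{v_2,v_4,v_5\})$ and $S'=\{v_3,v_4\}$ gives the symmetric inequality, so every edge contributes $(1+1)(1+1)=4$ and $Sz_3(C_5)=20\neq 5=m$. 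So you should state part (1) with the hypothesis strengthened to $(n-k+1)$-connectivity, under which your argument (every term of the sum equals $1$) goes through; the bounds of part (2) hold for all graphs anyway, so weakening the complementary case costs nothing.
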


\begin{proof}
$(1)$ Let $uv$ be an edge of $G$. Since $G$ is $(n-k)$-connected, it
follows that for any $S\subseteq V(G)$ and $|S'|=k-1$, $d_G(S\cup
\{u\})=d_G(S\cup \{v\})=k$, and hence $|N_u(e;k)|=|N_v(e;k)|=0$. So
$Sz_{k}(G)=m$.

$(2)$ From the definition, we have
$$
Sz_{k}(G) = \sum_{uv\in E(G)} (n_u(e;k)+1) (n_v(e;k)+1)\geq
\sum_{e=uv\in E(G)}1=e(G)=m.
$$
and
\begin{eqnarray*}
Sz_{k} (G)&=&\sum_{uv\in E(G)} (n_u(e;k)+1) (n_v(e;k)+1)\\[2mm]
&\leq &\sum_{uv\in E(G)}\left(\left\lceil\frac{1}{2}{n-2\choose
k-1}\right\rceil+1\right)\left(\left \lfloor\frac{1}{2}{n-2\choose
k-1}\right\rfloor+1\right)\\[2mm]
&=&m\left(\left\lceil\frac{1}{2}{n-2\choose
k-1}\right\rceil+1\right)\left(\left \lfloor\frac{1}{2}{n-2\choose
k-1}\right\rfloor+1\right).
\end{eqnarray*}
\end{proof}


\begin{thebibliography}{99}

\bibitem{Ali} P. Ali, P. Dankelmann, S. Mukwembi,
Upper bounds on the Steiner diameter of a graph, \emph{Discrete
Appl. Math.} 160 (2012) 1845--1850.

\bibitem{AH} M. Aouchiche, P. Hansen, On a conjecture about the Szeged index,
European J. Combin. 31(2010), 1662-1666.


\bibitem{Bondy} J.A. Bondy, U.S.R. Murty,
        \emph{Graph Theory}, Springer, New York, 2008.

\bibitem{BuHa} F. Buckley, F. Harary,
        \emph{Distance in Graphs}, Addison--Wesley, Redwood, 1990.

\bibitem{Caceresa} J. C\'{a}ceres, A. M\'{a}rquez, M.L. Puertas,
Steiner distance and convexity in graphs, \emph{Eur. J. Combin.} 29
(2008) 726--736.

\bibitem{Chartrand} G. Chartrand, O.R. Oellermann, S. Tian, H.B. Zou,
Steiner distance in graphs, \emph{\v{C}asopis Pest. Mat.} 114 (1989)
399--410.

\bibitem{arX} L. Chen, X. Li, M. Liu, The (revised) Szeged index and
the Wiener index of a nonbipartite graph, European J. Combin. 36 (2014) 237-246


\bibitem{DasGutman}
K. Ch. Das, I. Gutman, Estimating the Szeged index, \emph{Appl.
Math. Lett.} 22 (2009) 1680-1684.

\bibitem{Entr}
R.C. Entringer, D.E. Jackson, D.A. Snyder,
Distance in graphs, \emph{Czech. Math. J.} 26 (1976) 283--296.



\bibitem{Goddard} W. Goddard, O.R. Oellermann,
\emph{Distance in graphs}, in: M. Dehmer (Ed.), \emph{Structural
Analysis of Complex Networks}, Birkh\"{a}user, Dordrecht, 2011, pp.
49--72.



\bibitem{GutmanDobrynin}
I. Gutman, A. A. Dobrynin, The Szeged index--a success story,
\emph{Graph Theory Notes New York} 34 (1998) 37--44.

\bibitem{GFL} I. Gutman, B. Furtula, X. Li,
Multicenter Wiener indices and their applications, \emph{J. Serb.
Chem. Soc.} 80 (2015) 1009--1017.

\bibitem{W1} I. Gutman, K. Xu, M. Liu,
A congruence relation for Wiener and Szeged indices, \emph{Filomat}
29 (2015) 1081--1083.


\bibitem{LMG} X. Li, Y. Mao, I. Gutman,
The Steiner Wiener index of a graph, \emph{Discuss. Math. Graph
Theory} 36(2)(2016) 455--465.

\bibitem{LMG2} X. Li, Y. Mao, I. Gutman,
Inverse problem on the Steiner Wiener index, {\it Discuss. Math.
Graph Theory} 38 (2018), 83--95.


\bibitem{MWG} Y. Mao, Z. Wang, I. Gutman,
Steiner Wiener index of graph products, \emph{Trans. Combin.}
5(3)(2016), 39--50.

\bibitem{MWGL} Y. Mao, Z. Wang, I. Gutman, H. Li,
Nordhaus-Gaddum-type results for the Steiner Wiener index of graphs,
{\it Discrete Appl. Math.} 219 (2017), 167--175.

\bibitem{Oellermann} O.R. Oellermann, S. Tian,
Steiner centers in graphs, \emph{J. Graph Theory} 14 (1990),
585--597.

\bibitem{PR} T. Pisanski, M. Randi\'c, Use of the Szeged index and the
revised Szeged index for measuring network bipartivity, Discrete
Appl. Math. 158(2010), 1936-1944.

\bibitem{XZ} R. Xing, B. Zhou, On the revised Szeged index,
Discrete Appl. Math. 159(2011), 69-78.


\end{thebibliography}
\end{document}